\documentclass[12pt,reqno]{amsart}
\usepackage{amsthm,amsfonts,amssymb,euscript}

\def\ges{{\gtrsim}}
\def\les{\lesssim}

\def\R{{\mathbb{R}}}

\def\Z{{\mathbb{Z}}}
\def\T{{\mathbb{T}}}

\def\N{{\bf N}}

\newtheorem{theorem}{Theorem}[section]
\newtheorem{lemma}[theorem]{Lemma}

\newtheorem{remark}[theorem]{Remark}

\setlength{\textwidth}{16cm} \setlength{\oddsidemargin}{0cm}
\setlength{\evensidemargin}{0cm}
\numberwithin{equation}{section}

\begin{document}

\title[Periodic hyperbolic NLS on 2D]{Periodic Cubic Hyperbolic Schr\"odinger equation on $\T^2$}

\author{Yuzhao Wang}
\address{Department of Mathematics and Physics, North China Electric Power University, Beijing 102206, China}
\email{wangyuzhao2008@gmail.com}

\thanks{The author was supported by NSFC:11126247, 11201143; and partially supported by NSFC:11101143 and ``Special Funds for Co-construction Project of Beijing"}

\begin{abstract}
In this paper we consider the cubic Hyperbolic Schr\"odinger equation \eqref{eq:nls} on torus $\T^2$, and get the sharp $L^4$ Strichartz estimate, which implies that \eqref{eq:nls} is analytic locally well-posed in $H^s(\T^2)$ for $s>1/2$, meanwhile, the ill-posedness in $H^s(\T^2)$ for $s<1/2$ is also obtained. 
\end{abstract}
\maketitle

\section{Introduction and the Main results}\label{Intro}

The cubic Hperbolic Schr\"odinger equation (HNLS) on torus $\T^2$ has the form
\begin{equation}
\begin{cases}\label{eq:nls}
(i\partial_t+\Box)u=|u|^{2} u \qquad x\in \T^2,t \in \R \\
u(0,x) = u_0(x) \in H^s(\T^2)
\end{cases}
\end{equation}
where $\Box = \partial_{x_1}^2- \partial_{x_2}^2$. The solutions of \eqref{eq:nls} admit two conservation quantities, the Mass
\[
M(u) = \int_{\T^2}  | u|^2  dx,
\]
and the Hyperbolic Energy
\[
E(u) = \int_{\T^2} \frac12 |\nabla u|_-^2 \pm \frac{1}{4} |u|^{4} dx,
\]
where $|\nabla u|_-^2 = |\partial_{x_1}u|^2-|\partial_{x_2}|^2$. The hyperbolic energy fails to control the $H^1$ norm of the solution, which is the main obstacle to get global well-posedness. In this paper we will show the sharp local well-posedness and ill-posedness of the initial value problem \eqref{eq:nls}.

The periodic cubic Elliptic Schr\"odinger equation, which is \eqref{eq:nls} with $\Delta$ instead of $\Box$, is extensively studied since Bourgain \cite{Bo2} established the local theory in $H^s(\T^2)$ for $s>0$, which is sharp due to \cite{Kis}. However, there are neither well-posedness nor ill-posedness results on Hyperbolic Schr\"odinger equation \eqref{eq:nls} as far as we know.

The main difficulty comes from the Strichartz estimate for the hyperbolic semi-group. In Bourgain's fundamental works \cite{Bo2,Bo4}, the $L^4$ Strichartz estimate was reduced to counting the number of elements in the set
\[
\Gamma(k) = \{(n_1,n_2)\in \Z^2; |n_1|\le N, |n_2|\le N, n_1^2 + n_2^2 = k\},
\]
 then he showed that $\#\Gamma(k) \les N^{\varepsilon}$ for any $\varepsilon>0$, which implied
\begin{align}\label{st11}
\|e^{it\Delta} P_N f\|_{L^4_{t,x}(\T^3)} \les N^{\varepsilon } \|f\|_{L^2(\T^2)}.
\end{align}
Following the same ideas, the $L^4$ Strichartz estimate for linear Hyperbolic Schr\"odinger equation would be reduced to counting the number of elements in the set
\[
A_k = \{(n_1,n_2)\in \Z^2; |n_1|\le N, |n_2|\le N, n_1^2 - n_2^2 = k\}.
\]
Unfortunately, the argument in \cite{Bo2} depends on a result from \cite{BP}, which says if $\Gamma$ is a real analytic image of the circle $\mathbb{S}^1$, then for $t\rightarrow \infty$,
\[
\#\{t\Gamma \cap \Z^2\} \ll t^{\varepsilon}.
\]
While this result fails to be applied to our context, where $\Gamma = \{X^2 -Y^2 =1\}$ is clearly not a analytic image of $\mathbb{S}^1$. In the following, we will show that the following Strichartz estimate
\[
\|e^{it\Box} P_N f\|_{L^4_{t,x}(\T^3)} \les N^{1/4} \|f\|_{L^2(\T^2)},
\]
which is $\frac14-\varepsilon $ worse than the elliptic one \eqref{st11}, but it is still sharp, which can be obtained by constructing $\phi_N$ satisfying
\[
\|e^{it\Box} \phi_N\|_{L^4_{t,x}(\T^3)} \sim N^{1/4} \|\phi_N\|_{L^2(\T^2)}.
\]
After this frequency localized version Strichartz estimate being established, the local well-posedness follows from standard argument. Furthermore, we observe that the spectrums in the diagonal of $\Z^2$ is invariant under the hyperbolic semi-group $e^{it\Box}$, then the ill-posedness follows from this observation by adopting similar argument from \cite{Bo3,Kis}. Finally we get:

\begin{theorem}\label{Main1} (Main theorem) The cubic Hyperbolic Schr\"odinger equation \eqref{eq:nls} is analytic locally well-posed in $H^s(\T^2)$ for $s>1/2$. Furthermore, it is ill-posed in  $H^s(\T^2)$ for $s<1/2$ in the sense that the solution map is not $C^3$ continuous in $H^s(\T^2)$ even for small data.
\end{theorem}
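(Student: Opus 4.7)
Both halves of the theorem rely on the sharp frequency-localized $L^4$ Strichartz estimate
\[
\|e^{it\Box} P_N f\|_{L^4_{t,x}(\T \times \T^2)} \les N^{1/4} \|f\|_{L^2(\T^2)},
\]
which I would establish first by a direct count of integer points on the hyperbola $n_1^2 - n_2^2 = k$. The upper bound $N^{1/4}$ is essentially tight because of the lattice degeneracy on the diagonal $n_1 = \pm n_2$, and the matching lower bound would be produced by concentrating data there.

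\emph{Well-posedness for $s>1/2$.} I would work in Bourgain's $X^{s,b}$ space adapted to $i\partial_t + \Box$ with $b$ slightly above $1/2$. The standard transference principle upgrades the Strichartz bound to
\[
\|P_N u\|_{L^4_{t,x}} \les N^{1/4}\,\|P_N u\|_{X^{0,1/2+}}.
\]
The trilinear estimate $\||u|^2 u\|_{X^{s,-1/2+}} \les \|u\|_{X^{s,1/2+}}^3$ is then obtained by duality: test against a fourth factor, Littlewood-Paley decompose all four factors into dyadic scales $N_1,\dots,N_4$, apply H\"older in $L^4_{t,x}$, and feed each factor through Strichartz. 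This yields a loss $(N_1 N_2 N_3 N_4)^{1/4}$; since the smallest two scales must be comparable by frequency localization of the product, the resulting dyadic series converges exactly when $s>1/2$. The usual contraction mapping on the Duhamel formulation then delivers analytic local well-posedness.

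\emph{Ill-posedness for $s<1/2$.} The key observation is that on the diagonal $D = \{(k,k) : k \in \Z\}$ the symbol $n_1^2 - n_2^2$ vanishes, so every function with Fourier support in $D$ is fixed by $e^{it\Box}$, and such functions are closed under pointwise multiplication. Choosing $u_0 = \sum_k a_k e^{ik(x_1+x_2)}$, the third Picard iterate of the solution map $\Phi$ collapses to
\[
\Phi_3(u_0)(t) = -i \int_0^t e^{i(t-s)\Box}\, |u_0|^2 u_0\, ds = -it\,|u_0|^2 u_0.
\]
If $\Phi$ were $C^3$ at the origin its associated symmetric trilinear form would be bounded on $H^s$, which on diagonal data reduces to the 1D Sobolev algebra estimate $\||u_0|^2 u_0\|_{H^s} \les \|u_0\|_{H^s}^3$. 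Testing with the normalized trigonometric polynomial
\[
u_0 = N^{-s-1/2} \sum_{N \le k \le 2N} e^{ik(x_1+x_2)}
\]
gives $\|u_0\|_{H^s} \sim 1$ but a direct calculation yields $\||u_0|^2 u_0\|_{H^s} \sim N^{1-2s}$, which blows up as $N \to \infty$ for $s<1/2$; this rules out $C^3$ continuity, following the scheme of \cite{Bo3, Kis}.

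\emph{Main obstacle.} The delicate step is the cubic estimate in $X^{s,b}$: the $N^{1/4}$ Strichartz loss must be balanced against the dyadic summation, and one has to keep careful track of degenerate frequency configurations on the hyperbola where the lattice-point count saturates. What makes the theorem work is that the well-posedness threshold $s=1/2$ produced by this balance matches exactly the threshold at which the 1D algebra estimate on the diagonal fails, so the positive and negative halves meet without a gap.
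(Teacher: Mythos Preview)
Your ill-posedness argument is essentially the paper's own: concentrate on the diagonal where $H$ vanishes, observe that $e^{it\Box}$ acts trivially there, and check that the third-order Picard term grows like $N^{1-2s}$ relative to the data. That is fine.

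The well-posedness argument, however, has a real gap. Putting all four Littlewood--Paley pieces in $L^4$ and applying the $N^{1/4}$ Strichartz bound gives a loss $(N_1N_2N_3N_4)^{1/4}$, as you say, but the dyadic series does \emph{not} converge for $s>1/2$ from this alone. In the high--high--low--low configuration $N_1\sim N_4\gg N_2,N_3$ (here $N_4$ is the dual/output scale) the required bound reads
\[
N_4^{\,s}\,(N_1N_2N_3N_4)^{1/4}\ \lesssim\ N_1^{\,s}N_2^{\,s}N_3^{\,s},
\]
which with $N_4\sim N_1$ forces $N_1^{1/2}(N_2N_3)^{1/4}\lesssim (N_2N_3)^{s}$; take $N_2=N_3=1$ and this fails for every $s$. (Incidentally, frequency localization forces the two \emph{largest} scales to be comparable, not the two smallest.)

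What is missing is exactly the device the paper supplies: a hyperbolic Galilean shift $x\mapsto x+2t\overline m$ showing that the $L^4$ bound holds on \emph{any} translated cube $m+(-N,N]^2$, and then a bilinear estimate
\[
\|e^{it\Box}\phi_1\,e^{it\Box}\phi_2\|_{L^2_{t,x}}\ \lesssim\ \min(N_1,N_2)^{1/2}\,\|\phi_1\|_{L^2}\|\phi_2\|_{L^2},
\]
obtained by tiling the high-frequency factor into cubes of the low-frequency size and using almost orthogonality. This replaces $(N_1N_2)^{1/4}$ by $N_2^{1/2}$ in the high--low pairing and is precisely what closes the sum at $s>1/2$; the paper then invokes the Burq--G\'erard--Tzvetkov scheme to pass from this bilinear bound to local well-posedness. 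Your outline should insert this bilinear step (or an equivalent orthogonality argument) in place of the straight four-fold H\"older.
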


\begin{remark}
Stronger ill-posedness, such as discontinues of the solution map, should holds under $H^{1/2}$. But we can not prove it here, since we fail to show the local well-posedness in $H^{1/2}$, which is the key condition in such argument. See \cite{BT} for more discussion on the discontinues ill-posedness.
\end{remark}

\subsection*{Notations}\label{prelim}In this section we summarize our notations that are used in the rest of the paper. We define the Fourier transform of $f$ defined on $\T^2 = \mathbb{R}^2/\Z^2$ as follows
\begin{equation*}
\left(\mathcal{F}f\right)(n)= \widehat f (n):=\int_{\T^2}f(x)e^{-2\pi ix\cdot n}dx,
\end{equation*}
where $n\in \Z^2$.
We define the Hyperbolic Schr\"odinger propagator $e^{it\Box}$ by its Fourier transform
\begin{equation}\label{hyper_group}
\left(\mathcal{F}e^{it\Box}f\right)(n):=e^{-2\pi itH(n)}\left(\mathcal{F}f\right)(n).
\end{equation}
where $H(n) = n_1^2- n_2^2$ for $n= (n_1,n_2) \in \Z^2$.

We now define the Littlewood-Paley projections. We denote $\textbf{1}_{(-N,N]^2}$ to be the frequency projection on $(-N,N]^2 \cap \Z^2$. We define the Littlewood-Paley projectors $P_{\le N}$ by
\begin{equation*}
\begin{split}
\mathcal{F}\left(P_{\le N}f\right)\left(n\right) = \mathcal{F}\left(P_{(-N,N]^2}f\right)\left(n\right):=\textbf{1}_{(-N,N]^2}\left(\mathcal{F}f\right)(n),\qquad n\in \Z^2.
\end{split}
\end{equation*}
For any $a\in \Z^2$ we define $P_{ a +(-N,N]^2}$ by
\begin{equation*}
\mathcal{F}(P_{ a +(-N,N]^2} f):=\mathbf{1}_{ a +(-N,N]^2}\widehat f.
\end{equation*}

\subsection*{Function spaces.} We introduce the standard Bourgain space associate to the Hyperbolic Schr\"odinger semi-group \eqref{hyper_group}
\[
X^{s,b} = \{u\in \mathcal{S}'(\T^3); \|u\|_{X^{s,b}(\T\times\T^2)} < +\infty\},
\]
where
\[
\|u\|^2_{X^{s,b}(\T\times\T^2)} = \sum_{m\in\Z, n\in \Z^2}\Big|(1+|m + H(n)|)^b(1+|n|^2)^{s/2} \widehat u (m,n)\Big|^2.
\]
It also have the following form
\[
\|u\|^2_{X^{s,b}(\T\times\T^2)} = \|e^{it\Box}u\|_{H^b(H^s)} = \|\langle i\partial_t + \Box\rangle^b\langle\sqrt{-\Delta}\rangle^s u\|_{L^2}.
\]
In this paper, we will use the restriction of $X^{s,b}$ space to $[0,T]$ in the form
\[
\|u\|_{X^{s,b}_T} = \inf \{\|v\|_{X^{s,b}}, \text{ where }v\big|_{[0,T]} =u\}.
\]

\subsection*{Organization} In the rest of the paper, we will prove the Strichartz estimates in section 2, in section 3 we prove the well-posed result and the ill-posed result in section 4.

\section{The Strichartz estimates}\label{lastsection}
In this section, we will consider the Strichartz estimate for the solution of linear Hyperbolic Schr\"odinger equation, and the sharp $L^4$ Strichartz estimate is obtained.

\begin{theorem}\label{Str_esti}
(Strichartz estimates on $\T \times\mathbb{T}^2$)For any $N\geq 1$, and $f\in L^2(\T^2)$, we have
\begin{equation}\label{es:L4}
\|e^{it\Box}P_{\le N} f\|_{L^4(\T\times\mathbb{T}^2)}\lesssim N^{1/4} \|f\|_{L^2(\mathbb{T}^2)}.
\end{equation}
\end{theorem}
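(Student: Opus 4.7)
The plan is to reduce the Strichartz bound to a lattice-point count, then exploit the factorization of the hyperbolic symbol $H(n)=n_1^2-n_2^2$.

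I would first expand $u:=e^{it\Box}P_{\le N}f$ as its Fourier series
\[
u(t,x)=\sum_{n\in(-N,N]^2\cap\Z^2}\widehat f(n)\,e^{2\pi i(n\cdot x - tH(n))}.
\]
Since $H(n)\in\Z$, this is automatically $1$-periodic in $t$, so $u$ is well-defined on $\T\times\T^2$. Using the identity $\|u\|_{L^4}^2=\|u^2\|_{L^2}$ and Plancherel,
\[
\|u\|_{L^4(\T\times\T^2)}^4=\sum_{\tau\in\Z,\,k\in\Z^2}\bigl|\widehat{u^2}(\tau,k)\bigr|^2,\qquad \widehat{u^2}(\tau,k)=\sum_{(n,m)\in\Lambda(\tau,k)}\widehat f(n)\widehat f(m),
\]
where $\Lambda(\tau,k):=\{(n,m)\in((-N,N]^2\cap\Z^2)^2:\ n+m=k,\ H(n)+H(m)=-\tau\}$. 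A Cauchy--Schwarz on each coefficient, followed by summation and the observation that each $(n,m)$ lies in exactly one $\Lambda(\tau,k)$, yields
\[
\|u\|_{L^4}^4\le\Bigl(\sup_{\tau,k}\#\Lambda(\tau,k)\Bigr)\|f\|_{L^2}^4,
\]
so the whole theorem reduces to the counting bound $\sup_{\tau,k}\#\Lambda(\tau,k)\lesssim N$.

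To carry out the count I would parametrize $n=(a,b)$, $m=(k_1-a,k_2-b)$ and use the identity
\[
H(n)+H(m)=\tfrac12\bigl[(2a-k_1)^2-(2b-k_2)^2\bigr]+\tfrac12(k_1^2-k_2^2).
\]
Setting $X=2a-k_1$, $Y=2b-k_2$, $M=-2\tau-k_1^2+k_2^2\in\Z$, the problem becomes counting integer points on the conic
\[
X^2-Y^2=(X-Y)(X+Y)=M,\qquad |X|,|Y|\lesssim N.
\]
If $M\neq 0$, each solution corresponds to a factorization $M=de$ via $X=(d+e)/2$, $Y=(e-d)/2$, so the divisor bound gives $O_\varepsilon(|M|^\varepsilon)=O_\varepsilon(N^\varepsilon)$ solutions. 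If $M=0$, the conic degenerates into the two lines $X=\pm Y$, contributing $O(N)$ lattice points in the $N$-box. Taking the supremum, $\#\Lambda(\tau,k)\lesssim N$, and inserting above gives $\|u\|_{L^4}\lesssim N^{1/4}\|f\|_{L^2}$.

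The main obstacle---and the source of the $N^{1/4}$ loss relative to Bourgain's elliptic bound---is precisely the degenerate case $M=0$. In the elliptic analogue one has $X^2+Y^2=M$ with $M\ge 0$, and $M=0$ forces $X=Y=0$, so the divisor bound applies uniformly and yields the near-sharp $N^\varepsilon$. Here the indefinite form has the entire diagonal $X=\pm Y$ as its zero set, carrying $\sim N$ lattice points in a side-$N$ box, and no number-theoretic refinement can eliminate this. The extremizer $\phi_N$ mentioned in the introduction should therefore be chosen with Fourier support concentrated along exactly this resonant diagonal.
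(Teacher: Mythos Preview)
Your proposal is correct and follows essentially the same route as the paper: Plancherel on $u^2$, Cauchy--Schwarz to reduce to a lattice-point count for $H(2n-k)=M$, the factorization $(X-Y)(X+Y)=M$ with the divisor bound when $M\neq0$, and the observation that the degenerate case $M=0$ contributes $O(N)$ points along the diagonals. The only cosmetic difference is that the paper splits off the $M=0$ (their $l=0$) contribution \emph{before} applying Cauchy--Schwarz, obtaining the explicit diagonal term $\sum_a\bigl[\sum_{2n\in a+A_0}c_nc_{a-n}\bigr]^2$ separately, whereas you apply Cauchy--Schwarz uniformly and then take the supremum; for the purpose of the theorem both give the same $N^{1/4}$.
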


\begin{remark}
The proof is based on a arithmetical method as in \cite{Bo2}. Since form this approach, we will get a clear view on which part gives the main contribution and also the sharpness of the estimate. The strategy used here can be applied to other problems, such like high dimensional  or partial periodic problems.
\end{remark}

We plan to set up this theorem by several steps, first we reduce it to counting the number of representations of an integer as the difference of squares, that is to counting the number of members in the set
\[
A_k = \{n=(n_1,n_2)\in [-N,N]^2; n_1^2-n_2^2 = k\}.
\]

\begin{lemma}\label{le_l4}
If $f\in L^2(\T^2)$, and $\text{supp} \widehat f \subset [-N,N]^2$, then
\begin{align}\label{L4_esti}
\|e^{it\Box} f\|^2_{L^4(\T\times\mathbb{T}^2)}\lesssim  \Big(\sum_{a\in \Z^2} \Big[\sum_{2n\in a + A_0} |\widehat f_n \widehat f_{a-n}|\Big]^2\Big)^{1/2}
 + \sup_{l\neq 0} (\# A_l)^{1/2} \|f\|_{L^2(\mathbb{T}^2)}^2,
\end{align}
where $A_l = \{(n_1,n_2)\in [-N,N]^2; n_1^2-n_2^2 = l\}$.
\end{lemma}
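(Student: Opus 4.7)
The strategy is to convert the $L^4$ norm into a Fourier-side $\ell^2$ expression via Plancherel, and then isolate the ``resonant'' contribution from $\{H(d)=0\}$ separately from the generic part, which is where the hyperbolic analysis diverges from Bourgain's elliptic argument.

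First I would use $\|u\|_{L^4(\T\times\T^2)}^2 = \|u^2\|_{L^2(\T\times\T^2)}$ (valid for any complex $u$, since $|u^2|=|u|^2$), expand the Fourier series
\[
(e^{it\Box}f)^2(x,t) = \sum_{m,n\in\Z^2}\widehat f_m\, \widehat f_n\, e^{2\pi i(m+n)\cdot x}\, e^{-2\pi it(H(m)+H(n))},
\]
and group terms by the space--time frequency $(a,k) := (m+n,\,H(m)+H(n)) \in \Z^2\times\Z$. Plancherel on $\T\times\T^2$ then yields
\[
\|e^{it\Box}f\|_{L^4}^4 = \sum_{a\in\Z^2}\sum_{k\in\Z}\Big|\sum_{\substack{m+n=a\\ H(m)+H(n)=k}}\widehat f_m\, \widehat f_n\Big|^2.
\]

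Next I would reparametrize the inner sum by $d := m-n$, writing $m=(a+d)/2$ and $n=(a-d)/2$ under the automatic parity constraint $d \equiv a \pmod 2$. A direct computation gives the clean identity $H(m)+H(n) = \tfrac12(H(a)+H(d))$, so setting $l := 2k - H(a)$ the constraint becomes $d\in A_l$, up to a harmless enlargement of the cutoff from $[-N,N]^2$ to $[-2N,2N]^2$ (since $\#A_l$ is governed by the divisor structure of $l$ via $l=(d_1-d_2)(d_1+d_2)$, this inflates $\sup_l\#A_l$ only by an absolute constant). Splitting the $k$-sum according to $l=0$ vs.\ $l\neq 0$, the $l=0$ contribution, after pulling absolute values inside by the triangle inequality, is exactly the squared first term on the right: when $d\in A_0$ we have $2m = a+d \in a+A_0$, so the inner sum becomes $\sum_{2m\in a+A_0}|\widehat f_m\,\widehat f_{a-m}|$.

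For $l\neq 0$, I would apply Cauchy--Schwarz in $d \in A_l$ to obtain
\[
\Big|\sum_{d\in A_l}\widehat f_{(a+d)/2}\,\widehat f_{(a-d)/2}\Big|^2 \le (\#A_l) \sum_{d\in A_l}|\widehat f_{(a+d)/2}|^2 |\widehat f_{(a-d)/2}|^2,
\]
pull out $\sup_{l\neq 0}\#A_l$, and sum freely over $a\in\Z^2$ and $l\neq 0$. Since the substitution $(a,d)\mapsto(m,n) = ((a+d)/2,(a-d)/2)$ is a bijection onto $\Z^2\times\Z^2$ when one respects the parity constraint, the remaining double sum collapses to $\|f\|_{L^2(\T^2)}^4$. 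Combining the two pieces via $\sqrt{X+Y}\le\sqrt X + \sqrt Y$ produces the claimed bound.

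The only delicate conceptual point, and the reason for the splitting, is that the level set $A_0 = \{d_1 = \pm d_2\}$ is a union of two full lattice lines carrying $O(N)$ points, so a blanket Cauchy--Schwarz there would cost a factor of order $N$ and destroy the sharp exponent $1/4$ in Theorem~\ref{Str_esti}. Keeping the diagonal contribution in its bilinear form $\sum_{2n\in a+A_0}|\widehat f_n\,\widehat f_{a-n}|$ is precisely what preserves enough flexibility to estimate it separately (essentially by a one-dimensional Young/Hausdorff--Young argument along each of the two diagonals) when Theorem~\ref{Str_esti} is carried out in the next step.
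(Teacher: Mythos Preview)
Your proposal is correct and follows essentially the same approach as the paper: Plancherel on $\T\times\T^2$, the change of variables $d=2n-a$ (your $d=m-n$) together with the identity $H(m)+H(n)=\tfrac12(H(a)+H(d))$, the split $l=0$ versus $l\neq 0$, and Cauchy--Schwarz on the non-resonant piece. You are in fact slightly more careful than the paper in flagging the $[-N,N]^2$ versus $[-2N,2N]^2$ range discrepancy for $d$, which the paper glosses over but which is indeed harmless for the subsequent estimates.
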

\begin{proof}
We denote $H(n) = n_1^2- n_2^2 = n\cdot\overline n$, where $\bar n = (n_1, -n_2)$, then
\[
(e^{it\Box} f )(x) = \sum_{n\in \Z^2} \hat f (n) e^{2\pi i (n\cdot x + H(n) t)}.
\]
Now write
\begin{align}
\|e^{it\Box} &f\|^2_{L^4_{t,x}} = \|(e^{it\Box} f)^2\|_{L^2_{t,x}} \nonumber\\
= & \left\|\Big[\sum_{a\in \Z^2}\Big|\sum_n \hat f(n) \hat f(a-n) e^{2\pi i[H(n) + H(a-n)]t} \Big|^2\Big]^{1/2}\right\|_{L^2_t} \nonumber \\
= & \left[\sum_{a\in \Z^2}\Big\|\sum_n \hat f(n) \hat f(a-n) e^{2\pi i[H(n) + H(a-n)]t} \Big\|_{L^2_t}^2\right]^{1/2},
\end{align}
decompose the summation of $n$,
\begin{align*}
F_a (t) = & \sum_n \hat f(n) \hat f(a-n) e^{2\pi i[H(n) + H(a-n)]t} \\
= &  \sum_k \sum_{H(n) + H(a-n) = k} \hat f(n) \hat f(a-n) e^{2\pi ikt}.
\end{align*}
Denoting $c_n = |\hat f (n)|$,  by Plancherel identity we have\footnote{If $\hat f(n)$ are all nonnegative, then the `$\le$' is actually `$=$'.}
\[
\|F_a\|_{L^2_t(\T)} \le \Bigg(\sum_k \Big|\sum_{H(n) + H(a-n) = k} c_n c_{a-n} \Big|^2 \Bigg)^{1/2}
\]
Rewrite $H(n) + H(a-n) = k$ as $H(2n-a) + H(a) = 2k$, and denote $A_l = \{n\in [-N,N]^2; n_1^2-n_2^2 = l\}$ and $l = 2k- H(a)$, then the condition $H(n) + H(a-n) = k$ is equivalent to $2n\in a + A_l$, then we have
\[
\|F_a\|_{L^2_t(\T)} \le \Bigg(\sum_l \Big|\sum_{2n\in a + A_l} c_n c_{a-n} \Big|^2 \Bigg)^{1/2},
\]
then by H\"older's inequality, the above is bounded by
\begin{align*}
 &  \sum_{2n\in a + A_0} c_n c_{a-n}  + \Bigg(\sum_{l\neq 0} (\# A_l)\Big(\sum_{2n\in a + A_l} c^2_n c^2_{a-n} \Big) \Bigg)^{1/2} \\
\les & \sum_{2n\in a + A_0} c_n c_{a-n} + \sup_{l\neq 0} (\# A_l)^{1/2} \Big(\sum_n c^2_n c^2_{a-n} \Big)^{1/2}.
\end{align*}
Thus we complete the proof by taking $l^2$ norm over $a$.
\end{proof}

In order to control the second part in \eqref{L4_esti}, we need to count the number of elements in the set
\[
A_l = \{(n_1,n_2)\in \Z^2; |n_1| \le N, |n_2| \le N \text{ and } n_1^2 - n_2^2 = l\},
\]
when $l\neq 0$.

\begin{lemma}
\begin{align}\label{L4_esti1}
\sup_{l\in \Z,l\neq 0}\#A_l \le C_{\varepsilon} N^{\varepsilon}.
\end{align}
\end{lemma}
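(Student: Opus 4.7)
The plan is to exploit the factorization $n_1^2 - n_2^2 = (n_1-n_2)(n_1+n_2)$ and reduce to the classical divisor bound. First, I would observe that each $(n_1,n_2) \in A_l$ is in bijection with an ordered integer factorization $l = d\cdot e$ satisfying $d\equiv e \pmod{2}$, via $d = n_1-n_2$ and $e = n_1+n_2$ (the parity condition is exactly what ensures that $n_1 = (d+e)/2$ and $n_2 = (e-d)/2$ are integers). Dropping the parity restriction only enlarges the count, so
\[
\#A_l \;\le\; \#\{(d,e)\in\Z^2 : d\cdot e = l\} \;\le\; 2\,\tau(|l|),
\]
where $\tau(\cdot)$ denotes the number of positive divisors.

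Second, I would invoke the classical divisor bound $\tau(m) \le C_\varepsilon\, m^\varepsilon$ for every $\varepsilon>0$. Since any element of $A_l$ satisfies $|n_j|\le N$, the integer $l$ must lie in $|l| = |n_1^2 - n_2^2| \le 2N^2$, so it suffices to consider this range, whence
\[
\#A_l \;\le\; 2 C_\varepsilon (2N^2)^{\varepsilon} \;\le\; C_\varepsilon' \, N^{2\varepsilon}.
\]
Absorbing the constant factor and replacing $\varepsilon$ by $\varepsilon/2$ gives the claimed bound $\#A_l \le C_\varepsilon N^\varepsilon$.

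The main point — and the reason there is essentially no obstacle here — is that the hyperbolic form $X^2-Y^2$ factors over $\Z$, so the counting problem collapses to an elementary divisor estimate. This completely bypasses the Bombieri--Pila type result \cite{BP} that was needed in the elliptic case, where $X^2+Y^2$ factors only over the Gaussian integers and one is genuinely counting lattice points on an analytic curve. The only subtlety is the exclusion of $l=0$: the equation $n_1^2 = n_2^2$ has the two full diagonals $n_2 = \pm n_1$ as solutions inside $[-N,N]^2$, giving $\#A_0 \sim N$, and this is precisely the reason why the $A_0$ contribution is kept as a separate term on the right-hand side of \eqref{L4_esti} in Lemma \ref{le_l4} rather than being absorbed into the divisor bound above.
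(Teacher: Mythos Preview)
Your proof is correct and follows essentially the same route as the paper: both exploit the factorization $n_1^2-n_2^2=(n_1-n_2)(n_1+n_2)$ to reduce the count to ordered factorizations of $l$, then invoke the classical divisor bound $\tau(m)=O(m^\varepsilon)$ together with $|l|\lesssim N^2$. Your version is in fact slightly more careful, bounding by $2\tau(|l|)$ and noting the parity condition explicitly, whereas the paper writes the intermediate bound as $2\,d((2N)^2)$.
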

\begin{proof}
We notice that
\[
A_l = \{(n_1,n_2)\in \Z^2; |n_1| \le N, |n_2| \le N \text{ and } (n_1 + n_2)(n_1 - n_2) = l\},
\]
thus there is a one-to-one correspondence between $A_l$ and
\[
B_l = \{(m_1,m_2)\in \Z^2; |m_1| \le 2N, |m_2| \le 2N \text{ and } m_1m_2 = l\}.
\]
Since $l\neq 0$, thus we have
\[
\#B_l \le 2 d((2N)^2) = O(N^{\varepsilon}),
\]
where $d((2N)^2)$ is the number of divisors of $(2N)^2$.
\end{proof}

This Lemma implies that the second part of \eqref{L4_esti} cause no trouble, thus the main contribution comes from the first part of \eqref{L4_esti}. For this part, we have
\begin{lemma}
\begin{align}\label{L4_esti2}
\sum_{a\in \Z^2} \Big[\sum_{2n\in a + A_0} c_n c_{a-n}\Big]^2 \le N \Big[\sum_n c_n^2 \Big]^2.
\end{align}
\end{lemma}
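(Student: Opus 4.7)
The crucial feature of $A_0$ is that it consists of two lines: $A_0 = D_+ \cup D_-$ where $D_\pm = \{(m, \pm m) : |m| \le N\}$, each of cardinality $2N+1$. Writing
$$S_a := \sum_{2n \in a + A_0} c_n c_{a-n} = T_a^+ + T_a^-$$
according to this splitting (putting $(0,0)$ in $T^+$, say) and applying Minkowski's inequality in $\ell^2_a$, it suffices to show $\sum_a (T_a^\pm)^2 \lesssim N\bigl(\sum_n c_n^2\bigr)^2$. By the symmetry $(n_1,n_2) \mapsto (n_1,-n_2)$ we only treat $T^+$.

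The structural observation is that the constraint $2n - a \in D_+$ is translation invariant along $D_+$: it forces $n_1 - n_2 = (a_1 - a_2)/2$, and the same linear condition is then automatically satisfied by $a - n$. Hence for fixed $a$, both $n$ and $a-n$ lie on the single line $L_c := \{(m, m-c) : m \in \Z\}$ where $c := (a_1 - a_2)/2$ (otherwise $c \notin \Z$ and $T_a^+ = 0$). Parametrize $L_c$ by $m \mapsto (m,m-c)$ and set $b^{(c)}_m := c_{(m,m-c)}$; because the $c_n$ are supported in $[-N,N]^2$, the sequence $b^{(c)}$ has at most $2N+1$ nonzero entries. Writing $n = (m, m-c)$ and $a - n = (a_1-m, (a_1-m)-c)$ and dropping the auxiliary constraint $|2m-a_1| \le N$ (harmless since $c_n \ge 0$), we obtain
$$T_a^+ \le \sum_{m \in \Z} b^{(c)}_m \, b^{(c)}_{a_1 - m} = \bigl(b^{(c)} * b^{(c)}\bigr)(a_1).$$
Using $(c, a_1)$ as coordinates for the relevant $a \in \Z^2$, this gives $\sum_a (T_a^+)^2 \le \sum_{c \in \Z} \|b^{(c)} * b^{(c)}\|_{\ell^2(\Z)}^2$.

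The endgame is Young's inequality $\|b^{(c)}*b^{(c)}\|_{\ell^2}^2 \le \|b^{(c)}\|_{\ell^1}^2 \|b^{(c)}\|_{\ell^2}^2$, combined with the finite-support bound $\|b^{(c)}\|_{\ell^1}^2 \le (2N+1)\|b^{(c)}\|_{\ell^2}^2$ coming from Cauchy--Schwarz and $|\mathrm{supp}(b^{(c)})| \le 2N+1$. Since the lines $L_c$ partition $\Z^2$, one has $\sum_c \|b^{(c)}\|_{\ell^2}^2 = \sum_n c_n^2$, together with the trivial $\max_c \|b^{(c)}\|_{\ell^2}^2 \le \sum_n c_n^2$. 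Putting these together yields
$$\sum_a (T_a^+)^2 \le (2N+1)\,\max_c\|b^{(c)}\|_{\ell^2}^2\,\sum_c \|b^{(c)}\|_{\ell^2}^2 \;\lesssim\; N\Bigl(\sum_n c_n^2\Bigr)^2,$$
which is the desired bound. The main obstacle — and the reason a one-line proof is not available — is recognizing the 1D convolution structure: a naive Cauchy--Schwarz on the inner sum over $k \in A_0$ loses an extra factor of $\#A_0 \sim N$ that is fatal, so one must exploit the translation invariance of the diagonals of $A_0$ to replace Cauchy--Schwarz by Young's inequality with the correct exponents.
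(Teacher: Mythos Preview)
Your argument is correct, but considerably more elaborate than necessary. The paper's proof is a two-line application of Cauchy--Schwarz: using that $A_0=-A_0$ one has $\sum_{2n\in a+A_0} c_{a-n}^2=\sum_{2n\in a+A_0} c_n^2$, so
\[
\Bigl[\sum_{2n\in a+A_0} c_n c_{a-n}\Bigr]^2 \le \Bigl(\sum_{2n\in a+A_0} c_n^2\Bigr)^2 \le \Bigl(\sum_n c_n^2\Bigr)\sum_{2n\in a+A_0} c_n^2,
\]
and summing over $a$ gives $(\sum_n c_n^2)\sum_a\sum_{2n\in a+A_0} c_n^2=(\#A_0)(\sum_n c_n^2)^2\sim N(\sum_n c_n^2)^2$, since for fixed $n$ there are exactly $\#A_0$ values of $a$ with $2n-a\in A_0$.

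In particular, your closing remark that ``a naive Cauchy--Schwarz on the inner sum \ldots\ loses an extra factor of $\#A_0\sim N$ that is fatal'' is not right: the straightforward Cauchy--Schwarz already lands on the correct bound, and no convolution/Young machinery is needed. Your approach---splitting $A_0$ into the two diagonals, recognizing the 1D convolution structure along each, and applying Young's inequality---is a valid and robust alternative (it would be the natural route if $A_0$ were replaced by a set without the symmetry $A_0=-A_0$, or if one wanted refined information about which $c_n$ saturate the bound), but here it is overkill compared with the paper's direct argument.
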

\begin{proof}
By Cauchy-Schwartz inequality, we have
\begin{align*}
\sum_{a\in \Z^2} \Big[\sum_{2n\in a + A_0} c_n c_{a-n}\Big]^2 \le & \sum_{a\in \Z^2}\Big( \sum_{2n\in a+ A_0} c_n^2 \sum_{2n\in a+ A_0} c^2_{n}\Big)\\
\le & \Big( \sum_{a\in \Z^2}\sum_{2n\in a+ A_0} c_n^2\Big) \sum_{n} c^2_{n},
\end{align*}
thus \eqref{L4_esti2} follows from $\#A_0  \sim N$.
\end{proof}

\begin{proof}[Proof of Theorem \ref{Str_esti}]
It follows directly form \eqref{L4_esti}, \eqref{L4_esti1} and \eqref{L4_esti2}.
\end{proof}

\begin{remark}
It turns out that \eqref{es:L4} is sharp, which means that $N^{1/4}$ is necessary. In view of the proof of Theorem \ref{Str_esti}, we find out that the main contribution comes from where $c_n$ localized on the diagonal of $\Z^2$. So let $N\in \mathbb{N}$, and define $\phi _{N}\in L^2(\T^2)$ by
\[
\phi _{N}(x):= \sum _{k\in \Z_N^{2,d}}e^{2 \pi ik\cdot x},
\]
where $\Z_N^{2,d}:=\{(k_1,k_1)\in \Z^2 ; |k_1|\le N \}$ is the diagonal of $\Z^2$. In view of \eqref{hyper_group}, we have
\[
e^{it\Box}\phi _{N} =\sum _{k\in \Z_N^{2,d}}e^{-2\pi iH(k)t}e^{2\pi ik\cdot x}  =\sum _{k\in \Z_N^{2,d}} e^{2\pi ik\cdot x} = \phi _{N},
\]
since $H(k)=0$ for $k\in \Z_N^{2,d}$. Furthermore, following the argument in Lemma \ref{le_l4}, we have
\[
\|\phi _{N}\|^2_{L^4} = \Big(\sum_{a\in \Z_N^{2,d}} \Big( \sum_{2n\in a + A_0} c_n c_{a-n} \Big)^2 \Big)^{1/2},
\]
where $c_n=1$ when $n\in \Z_N^{2,d}$, and vanish otherwise. Thus we have
\[
\sum_{a\in \Z_N^{2,d}} \Big( \sum_{2n\in a + A_0} c_n c_{a-n} \Big)^2 \sim \sum_{k=1}^N k^2 \sim N^3,
\]
which implies that $\|\phi _{N}\|_{L^4} \sim N^{3/4}$, while it is easy to see that $\|\phi _{N}\|_{L^2} \sim N^{1/2}$. Finally we get
\[
\|e^{it\Box}\phi _{N}\|_{L^4}=\|\phi _{N}\|_{L^4} \sim N^{1/4} \|\phi _{N}\|_{L^2}.
\]
This is very different from the Strichartz estimates of elliptic Schr\"odinger equations, see \cite{Bo2,Kis}.
\end{remark}

\section{Local well-posedness}
In this section, we will prove the well-posedness part of Theorem \ref{Main1}. The main new tool is the Hyperbolic type Galilean transform. First we notice that comparing with Elliptic Schr\"odinger semi-group, the following linear estimates associate to Hyperbolic Schr\"odinger semi-group still holds:
\[
\|e^{it\Box} \phi\|_{X_T^{s,b}} \les (T^{1/2}+ T^{1/2-b})\|\phi\|_{H^s},
\]
\[
\Big\|\int_0^t e^{i(t-t')\Box} f(t') dt'\Big\|_{X_T^{s,b}} \les T^{1-b-b'} \|f\|_{X_T^{s,-b'}}.
\]
Then the well-posed theory of \eqref{eq:nls} reduce to a bilinear estimate due to the following lemma.

\begin{lemma}[Burq-G\'erard-Tzvetkov]\label{lebi-est}
Assume $s_0>0$, $\phi_1,\phi_2 \in L^2(\T^2)$ and supp$\widehat \phi_i \subset [-N_i,N_i]^2$. If
\begin{align}\label{bi-est}
\|e^{\pm it\Box}\phi_1 e^{it\Box}\phi_2\|_{L^2_{t,x}} \les \min\{N_1,N_2\}^{s_0}\|\phi_1\|_{L^2}\|\phi_2\|_{L^2},
\end{align}
holds, then the Cauchy problem of cubic Hyperbolic Schr\"odinger equation \eqref{eq:nls} is locally well-posed in $H^s$ for $s> s_0$.
\end{lemma}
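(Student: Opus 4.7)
The plan is to run a Picard iteration for the Duhamel formulation of \eqref{eq:nls} in the Bourgain space $X^{s,b}_T$ for some $b$ slightly above $1/2$. The two linear estimates quoted just above the lemma already control the linear propagator and the inhomogeneous term, so provided one has the trilinear estimate
\[
\|u_1\bar u_2 u_3\|_{X^{s,-b'}_T}\lesssim \|u_1\|_{X^{s,b}_T}\|u_2\|_{X^{s,b}_T}\|u_3\|_{X^{s,b}_T}
\]
for some $b>1/2$, $b'<1/2$ with $b+b'<1$, the Duhamel map is a contraction on a small ball of $X^{s,b}_T$ for $T=T(\|u_0\|_{H^s})$. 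Uniqueness and the analyticity of the solution map are then automatic, since each Picard iterate is polynomial in $u_0$ and $\bar u_0$.

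The trilinear estimate I would prove by duality: pairing against a fourth function $u_4\in X^{-s,b'}$ reduces matters to
\[
\Big|\int_{\T\times\T^2} u_1\bar u_2 u_3\bar u_4\,dt\,dx\Big|\lesssim \|u_1\|_{X^{s,b}}\|u_2\|_{X^{s,b}}\|u_3\|_{X^{s,b}}\|u_4\|_{X^{-s,b'}}.
\]
Littlewood-Paley decomposing each $u_j$ into frequency pieces $P_{N_j}u_j$ and applying Cauchy-Schwarz splits the integral into a product of two $L^2_{t,x}$ bilinear norms, e.g.\ $\|u_1 u_3\|_{L^2_{t,x}}\|\bar u_2\bar u_4\|_{L^2_{t,x}}$. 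The hypothesis \eqref{bi-est} is stated for free solutions, but one extends it to $X^{0,b}$ with $b>1/2$ via the standard transfer principle, i.e.\ by writing $u$ as a $\tau$-superposition of modulated free evolutions $e^{it\tau}e^{it\Box}(\cdot)$ with $\ell^1_\tau$-summable coefficients (Cauchy-Schwarz in $\tau$). The $\pm$ sign freedom in \eqref{bi-est} is precisely what is needed to accommodate the complex conjugates, since a bar on $u_j$ turns $e^{it\Box}$ into $e^{-it\Box}$ at the level of Fourier phases.

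Ordering the spatial frequencies $N_1\ge N_2\ge N_3\ge N_4$ (with $N_1\sim N_2$ forced by the convolution constraint $n_1-n_2+n_3-n_4=0$), the $X^{s,b}$ weights contribute $N_1^{-s}N_2^{-s}N_3^{-s}N_4^{s}$, while the two bilinear factors cost $N_3^{s_0}N_4^{s_0}$ regardless of which admissible pairing is chosen. The resulting dyadic series $N_1^{-2s}N_3^{s_0-s}N_4^{s_0+s}$ summed over $N_4\le N_3\le N_1$ is geometric and converges precisely when $s>s_0$, yielding the desired trilinear bound. The main obstacle I anticipate is the additional dyadic decomposition in the modulation variables $\langle m+H(n)\rangle\sim L_j$ that is required to put the bilinear estimate on genuine frequency-and-modulation-localised pieces; the $L_j$ sums must then be reassembled using the margin $b+b'<1$, which is also where a positive power of $T$ is squeezed out of the Duhamel estimate to close the contraction.
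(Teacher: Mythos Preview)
The paper does not actually prove this lemma: its ``proof'' is a reference to Burq--G\'erard--Tzvetkov \cite{BGT} and related works \cite{Bo4,CW,W2}. Your sketch is exactly the standard argument from those references---transfer the bilinear free-solution estimate to $X^{0,b}$, prove the trilinear bound by duality and dyadic decomposition, and close the contraction in $X^{s,b}_T$---so in substance you are reproducing what the paper cites rather than diverging from it.

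One small gap worth flagging: when you write ``the $X^{s,b}$ weights contribute $N_1^{-s}N_2^{-s}N_3^{-s}N_4^{s}$'' after ordering $N_1\ge N_2\ge N_3\ge N_4$, you are implicitly assuming the dual function (the one in $X^{-s,b'}$) sits at the \emph{smallest} frequency $N_4$. In general it may sit at any position; in particular when it carries one of the two top frequencies the weight becomes $N_1^{s}N_2^{-s}N_3^{-s}N_4^{-s}\sim N_3^{-s}N_4^{-s}$ and the sum is in fact easier. You should enumerate the placements of the dual function (or simply note the worst case) to make the summation argument complete. The modulation issue you raise at the end---that the transfer principle needs $b>1/2$ but the dual function only lives in $X^{-s,b'}$ with $b'<1/2$---is real and is handled in the references either by an additional dyadic decomposition in modulation or by interpolating the transferred bilinear estimate against the trivial $L^2\times L^\infty$ bound; either route recovers the small power of $T$.
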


\begin{proof}
This lemma assert that the bilinear estimate on semi-group implies the local well-posedness. Such lemma was first obtained by Burq-G\'erard-Tzvetkov \cite[Section 2]{BGT}. Also see \cite[Proposition 1.2,1.4]{Bo4}, \cite[Proposition 3]{CW} or \cite{W2} for more similar applications.
\end{proof}

Now it suffices to prove \eqref{bi-est} with $+$ sign, we notice that \eqref{es:L4} can be rewritten as
\[
\Big\|\sum_{n\in (-N,N]^2} a_n e^{2\pi i(x\cdot n + t H(n))}\Big\|_{L^4(\T^3)} \les N^{\frac14} \Big(\sum_{n\in \Z^2}|a_n|^2\Big)^{1/2},
\]
where $H(n) = n_1^2-n_2^2$. Let $m+(-N,N]^2$ be a square of size $N$ in $\Z^2$, centered at $m\in \Z^2$. We need the following Hyperbolic type Galilean transform to shift the center of the frequency localization,
\[
x\cdot n + t H(n) = x\cdot m + t H(m) + (x+2t\overline{m})\cdot (n-m) + t H(n-m),
\]
where $\overline{m} = (m_1, - m_2)$ is the dual of $m$. Change of variables $x'=x+ 2t\overline{m}$, $t' = t$, then we obtain that
\[
\Big\|\sum_{n\in  m+(-N,N]^2} a_n e^{2\pi i(x\cdot n + t H(n))}\Big\|_{L^4(\T^3)} \les N^{\frac14} \Big(\sum_{n\in \Z^2}|a_n|^2\Big)^{1/2},
\]
which implies that
\begin{align}\label{trans-str}
\|e^{it\Box}P_{a+(-N,N]^2}\phi\|_{L^4_{t,x}} \les N^{\frac14} \|\phi\|_{L^2},
\end{align}
where $P_{a+(-N,N]^2}$ is the projection operator to the frequency $a+(-N,N]^2$.

Now we are ready to prove \eqref{bi-est} with $s_0 =\frac12$. Assume $N_1 \geq N_2$, we first decompose $\Z^2$ into disjoint $N_2$ cubics: $\displaystyle \Z^2 = \sum_a (a+(-N_2,N_2]^2)\cap \Z^2$, then we have the orthogonality decomposition
\[
\phi_1 = \sum_a P_{a+(-N_2,N_2]^2} \phi_1 := \sum_a \phi_1^a,
\]
where $\phi_1^a= P_{a+(-N_2,N_2]^2} \phi_1$. Then using H\"older's inequality, we obtain
\[
\| e^{it \Box} \phi_1  e^{it \Box} \phi_2  \|_{L^2_t L^2_x}
 =  \Big\| \sum_a e^{it \Box} \phi^a_1  e^{it \Box} \phi_2\Big\|_{L^2_t L^2_x},
\]
we notice that $ e^{it \Box} \phi^a_1  e^{it \Box} \phi_2$ are almost orthogonal, then the above can be bounded by
\begin{align*}
\Big(\sum_a \| e^{it \Box} \phi^a_1  e^{it \Box} \phi_2
\|^2_{L^2_t L^2_x}\Big)^{1/2} \lesssim & \Big(\sum_a \| e^{it \Box} \phi^a_1\|^2_{L^4_t L^4_x} \| e^{it \Box} \phi_2\|^2_{L^4_t L^4_x}\Big)^{1/2} \\
\lesssim& N_2^{\frac12} \Big(\sum_a \|\phi^a_1\|^2_{L^2}\Big)^{1/2} \| \phi_2\|_{L^2} \\
\lesssim& N_2^{\frac12} \|\phi_1\|_{L^2} \|\phi_2\|_{L^2} ,
\end{align*}
which prove \eqref{bi-est} with $s_0 =\frac12$. Finally, the well-posedness of Cauchy problem \eqref{eq:nls} follows from Lemma \ref{lebi-est}.

\section{Ill-posedness}
The main result in this section is that the solution map of \eqref{eq:nls} is not $C^3$ in $H^s(\T^2)$ for $s<1/2$. And we will prove this result by adopting the idea from \cite{Bo3,Kis}, the main obstacle comes from the hyperbolic nature of the semi-group, which tend to give big resonance set, see \cite{O} for similar phenomenon. The main observation in this section is that the functions, whose spectrums lie on the diagonal of $\Z^2$, is invariant under the map $e^{it\Box}$. And we get

\begin{theorem}\label{thm:notsmooth}
The solution map associated with \eqref{eq:nls} is not $C^3$ from the the origin of $H^s(\T^2)$ to $C([0,T];H^s(\T^2))$ for $s<1/2$.
\end{theorem}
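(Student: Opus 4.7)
The plan is to run the standard Bourgain--Kishimoto obstruction argument \cite{Bo3,Kis}, exploiting the observation already used in Section 2 that functions whose spectrum lies on the diagonal $\Z_N^{2,d}=\{(k,k):|k|\le N\}$ are fixed by $e^{it\Box}$, since $H(k)=0$ on this set. If the solution map were $C^3$ from a neighborhood of $0\in H^s(\T^2)$ into $C([0,T];H^s(\T^2))$, then its third Fr\'echet derivative at the origin would be a bounded trilinear map, and a formal Taylor expansion of the Duhamel formulation in powers of the initial datum $\phi$ identifies its diagonal value with the third Picard iterate
\begin{equation*}
u_3(t) \;=\; -i\int_0^t e^{i(t-t')\Box}\bigl(|e^{it'\Box}\phi|^2\,e^{it'\Box}\phi\bigr)\,dt',
\end{equation*}
so one would obtain a trilinear estimate of the form $\sup_{t\in[0,T]}\|u_3(t)\|_{H^s}\lesssim \|\phi\|_{H^s}^3$.

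I would test this inequality against the function $\phi_N$ constructed in the remark at the end of Section 2. By the observation above, $e^{it\Box}\phi_N=\phi_N$. Since $\Z_N^{2,d}$ is closed under the operation $(n_1,n_2,n_3)\mapsto n_1+n_2-n_3$, the Fourier support of $|\phi_N|^2\phi_N$ also lies on the diagonal (inside $\{(m,m):|m|\le 3N\}$), hence $e^{i(t-t')\Box}$ acts trivially on it as well. Consequently the integrand is independent of $t'$ and
\begin{equation*}
u_3(t) \;=\; -it\,|\phi_N|^2\phi_N.
\end{equation*}

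It remains to compare norms. A direct computation gives $\|\phi_N\|_{H^s}^2\sim N^{2s+1}$ from the definition. To estimate $\||\phi_N|^2\phi_N\|_{H^s}$, I would write $\phi_N(x)=D_N(x_1+x_2)$ for a Dirichlet-type kernel $D_N$ in one variable; then the $(m,m)$ Fourier coefficient of $|\phi_N|^2\phi_N$ equals $\#\{(a,b,c)\in[-N,N]^3:a+b-c=m\}$, which is $\sim N^2$ for $|m|\lesssim N$. This yields $\||\phi_N|^2\phi_N\|_{H^s}^2\gtrsim N^{2s+5}$, i.e.\ $\|u_3(t)\|_{H^s}\sim t\,N^{s+5/2}$ for any fixed $t>0$. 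Inserting these two estimates into the hoped-for trilinear bound forces
\begin{equation*}
N^{s+5/2} \;\lesssim\; N^{3s+3/2} \qquad \text{uniformly in } N,
\end{equation*}
which is equivalent to $s\ge 1/2$. For $s<1/2$, sending $N\to\infty$ produces the desired contradiction.

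The only non-routine point is the identification of the third Fr\'echet derivative at $0$ with the Picard iterate $u_3$; this is standard for analytic nonlinearities and is handled by a power-series inversion of the fixed-point equation in a small ball of $X^{s,b}$, which is already available to us from Section 3. Everything else is a direct consequence of the invariance of the diagonal spectrum under both the linear flow and the cubic nonlinearity --- the same resonance mechanism that produces the $N^{1/4}$ loss in the Strichartz estimate of Section 2, and which is precisely what makes the well-posedness and ill-posedness thresholds coincide at $s=1/2$.
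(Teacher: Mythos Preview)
Your argument is correct and follows essentially the same route as the paper: the same diagonal test function, the same observation that $e^{it\Box}\phi_N=\phi_N$ so that the third Picard iterate collapses to $-it\,|\phi_N|^2\phi_N$, and the same counting estimate $\#\{(a,b,c)\in[-N,N]^3:a-b+c=m\}\sim N^2$ for $|m|\lesssim N$. The only cosmetic difference is that the paper normalizes $\phi_N$ by $N^{-1/2}$ so that $\|\phi_N\|_{H^s}\sim N^s$, which turns your inequality $N^{s+5/2}\lesssim N^{3s+3/2}$ into the equivalent $N^{1+s}\lesssim N^{3s}$; both yield the threshold $s\ge 1/2$.
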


Such ill-posedness implies that the standard iteration argument fails for the periodic cubic hyperbolic Schr\"odinger equation in $H^s(\T^2)$ with $s<1/2$ for small data, which is different from the nonperiodic case when $0<s<1/2$.

The ideas used here was first developed by Bourgain~\cite{Bo3} for periodic KdV equation on $\T$. Recently, Kishimoto \cite{Kis} applied this idea to the periodic Schr\"odinger equations in dimension 1 and 2. By standard argument, see \cite{Kis} for example, Theorem \ref{thm:notsmooth} can be reduced to the following lemma, which shows that the first Picard iteration is not bounded in $H^{s}(\T^2)$ for $s<1/2$.

\begin{lemma}\label{le:u}
Let $N\in \N$. Define $\phi _{N}\in L^2(\T^2)$ by
\[
\phi _{N}(x):=N^{-\frac12}\sum _{k\in \Z_N^{2,d}}e^{2 \pi ik\cdot x},
\]
where $\Z_N^{2,d}:=\{(k_1,k_1)\in \Z^2 ; |k_1|\le N \}$ is the diagonal set.
Then for $s>0$, we have $\|\phi _{N}\|_{H^s(\T^2)}\sim N^s$ and
\[
\|A[\phi _{N}](t)\|_{H^s(\T^2)}\ges N^{1+s} t,
\]
where
\[ A[\phi ](t):=-i\mu \int _0^t e^{i(t-t')\Box}\Big[ |e^{it'\Box}\phi |^{2}e^{it'\Box}\phi \Big] \,dt'.\]
\end{lemma}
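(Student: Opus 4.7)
The plan is to exploit the central observation already highlighted in the preceding remark: the diagonal $\Z_N^{2,d}$ lies in the kernel of the symbol $H(k)=k_1^2-k_2^2$, so any function with Fourier support on a diagonal set is left invariant by $e^{it\Box}$. Since a product of such functions still has diagonal spectrum, the Duhamel integrand defining $A[\phi_N]$ will be independent of $t'$, which immediately delivers the linear-in-$t$ growth.

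The first step is the norm identity $\|\phi_N\|_{H^s}\sim N^s$. For $k=(k_1,k_1)$ one has $\langle k\rangle^{2s}\sim(1+k_1^2)^s$, and $\phi_N$ has $2N+1$ Fourier coefficients each of size $N^{-1/2}$, so
\[
\|\phi_N\|_{H^s}^2 = N^{-1}\sum_{|k_1|\le N}(1+2k_1^2)^s \sim N^{-1}\cdot N^{2s+1} = N^{2s}.
\]
Next, because $H$ vanishes on the diagonal, $e^{it'\Box}\phi_N=\phi_N$ for every $t'$, and the symmetry $k_1\mapsto -k_1$ of the defining sum shows $\phi_N$ is real-valued, whence $|e^{it'\Box}\phi_N|^2 e^{it'\Box}\phi_N = \phi_N^3$. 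The Fourier support of $\phi_N^3$ lies in $\Z_N^{2,d}+\Z_N^{2,d}+\Z_N^{2,d}\subset \Z_{3N}^{2,d}$, so again $H$ vanishes there and $e^{i(t-t')\Box}\phi_N^3=\phi_N^3$. Hence the integrand is constant in $t'$ and
\[
A[\phi_N](t) = -i\mu t\,\phi_N^3.
\]

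It then remains to estimate $\|\phi_N^3\|_{H^s}$ from below. Expanding the cube gives
\[
\phi_N^3(x) = N^{-3/2}\sum_{|m|\le 3N}c_m\, e^{2\pi i(m,m)\cdot x},\qquad c_m := \#\{(j_1,j_2,j_3)\in[-N,N]^3:j_1+j_2+j_3=m\}.
\]
A short count yields $c_m\ges N^2$ for every $|m|\le N$: the constraint $j_3=m-j_1-j_2\in[-N,N]$ reduces to $j_1+j_2\in[m-N,m+N]$, and for $|m|\le N$ the set of such $(j_1,j_2)\in[-N,N]^2$ has cardinality $\sim N^2$. Using $\langle(m,m)\rangle^{2s}\sim(1+m^2)^s$ we conclude
\[
\|\phi_N^3\|_{H^s}^2 \geq N^{-3}\sum_{|m|\le N}(1+m^2)^s c_m^2 \ges N^{-3}\cdot N^{2s+1}\cdot N^4 = N^{2s+2},
\]
so $\|\phi_N^3\|_{H^s}\ges N^{s+1}$. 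Combining with the identity $A[\phi_N](t)=-i\mu t\,\phi_N^3$ gives the desired lower bound. The main (and only) computational hurdle is the triple-convolution count $c_m\ges N^2$; the rest is an algebraic consequence of the vanishing of $H$ on the diagonal.
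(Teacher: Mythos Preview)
Your proof is correct and follows essentially the same route as the paper: both exploit $H|_{\Z^{2,d}}=0$ to reduce $A[\phi_N](t)$ to $t$ times a fixed cubic expression, and then lower-bound the $H^s$ norm via the triple-sum count $\#\{(j_1,j_2,j_3)\in[-N,N]^3: j_1-j_2+j_3=m\}\ges N^2$ for $|m|\le N$. Your use of the real-valuedness of $\phi_N$ to write the cubic nonlinearity simply as $\phi_N^3$ is a mild cosmetic streamlining of the paper's Fourier-side expansion, but the argument is otherwise identical.
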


\begin{remark}
The first Picard iteration of \eqref{eq:nls} is $A[\phi]$, if it is bounded in $H^{s}(\T^2)$ for $s<1/2$, we should have
\[
\|A[\phi _{N}](1)\|_{H^s(\T^2)} \les N^{3s}.
\]
In view of this lemma, we have that $N^{1+s} \les N^{3s}$, which contradict with $s<1/4$.
Our proof relies on the hyperbolic nature of the semi-group.
\end{remark}

\begin{proof}[Proof of Lemma~\ref{le:u}]
In view of the definition of the Hyperbolic Schr\"odinger semi-group, we have
\[
e^{it\Box}\phi _{N}=N^{-\frac12}\sum _{k\in \Z_N^{2,d}}e^{-2\pi iH(k)t}e^{2\pi ik\cdot x},
\]
where $H(k)=k_{01}^2-k_{02}^2$ for $k= (k_{01},k_{02})$, then
\begin{align*}
A[\phi _{N}](t,x)= & cN^{-\frac32}\sum _{k\in \Z_{3N}^{2,d}}e^{2\pi ik\cdot x}\int _0^te^{-2\pi iH(k)(t-t')}\\
 & \times\sum _{\begin{smallmatrix} k_1,k_2,k_3\in \Z_N^{2,d}\\ k_1-k_2+k_3=k \end{smallmatrix}}e^{-2\pi i[H(k_1)-H(k_2)+H(k_3)]t'}\,dt'.
\end{align*}
We notice that since $k, k_1,k_2,k_3\in \Z_N^{2,d}$, thus by the definition
\[
H(k) =H(k_1) =H(k_2) = H(k_3) =0.
\]
So we get
\[
A[\phi _{N}](t,x)=  c t N^{-\frac32}\sum _{k\in \Z_{3N}^{2,d}}e^{2\pi ik\cdot x}  \#\Gamma (k),
\]
where
\[ \Gamma (k):=\{(k_1,k_2,k_3)\in (\Z^{2,d}_{N})^3; k_1-k_2+k_3=k\}.\]
Thus we have
\[
\|A[\phi _{N}](1)\|_{H^s(\T^2)} \ge \|P_{\le N/4}A[\phi _{N}](1)\|_{H^s(\T^2)}
\]
Then in order to finish the proof, it is sufficient to show
\begin{align}\label{gamma}
\# \Gamma (k)\ges N^2\quad \text{for $k\in \Z^2_{N/2}$},
\end{align}
which is just
\[
\# \{(m,l,n)\in (\Z_{N})^3; m-l+n=q\} \sim N^2 \quad \text{for $q\in \Z_{N/2}$}.
\]
Then \eqref{gamma} follows and we complete the proof.
\end{proof}

{\bf Acknowledgement}. The author wishes to thank Professor Zihua Guo, for his valuable comments and enthusiastic encouragement. I am also very gratefully to Professor Tzvetkov, who kindly pointed out that Lemma \ref{lebi-est} was first obtained in their work \cite{BGT}, and he also explained their recent work \cite{GT}, which generalize our result by a very different approach, to the author. Finally, I want to thank the referee for his careful reading and useful remarks.


\begin{thebibliography}{99}

\bibitem{BT} I. Bejenaru, T. Tao, Sharp well-posedness and ill-posedness results for a quadratic non-linear Schr\"odinger equation, J. Funct. Anal. {\bf 233} (2006) 228-59.

\bibitem{BP} E. Bombieri, J. Pila, The number of integral points on arcs and ovals, Duke Math. J. {\bf{3}} (1989), 337--357.

\bibitem{Bo2} J. Bourgain, Fourier transform restriction phenomena for certain lattice subsets and applications to nonlinear evolution equations. I. Schr\"{o}dinger equations, Geom. Funct. Anal. {\bf{3}} (1993), 107--156.

\bibitem{Bo3} J. Bourgain, Periodic Korteweg de Vries equation with measures as initial data, Selecta Math.  {\bf{3}} (1997), 115--159.

\bibitem{Bo4} J. Bourgain, On Strichartz's inequalities and the nonlinear Schr\"odinger equation on irrational tori, Mathematical aspects of nonlinear dispersive equations, 1--20, Ann. of Math. Stud., {\bf{163}}, Princeton Univ. Press, Princeton, NJ, 2007.

\bibitem{BGT} N. Burq, P. G\'erard, N. Tzvetkov, Bilinear eigenfunction estimates and the nonlinear Schr\"odinger equation on surfaces, Invent. Maths. {\bf{159}} (2005) 187--223.

\bibitem{CW} F. Catoire and W.-M. Wang, Bounds on Sobolev norms for the defocusing nonlinear
Schr\"odinger equation on general flat tori, Commun. Pure Appl. Anal. {\bf{9}} (2010),  483--491.

\bibitem{GT} N. Godet, N. Tzvetkov,
Strichartz estimates for the periodic non elliptic Schr\"odinger equation, To appear in CRAS

\bibitem{Kis} N. Kishimoto, Remark on the periodic mass critical nonlinear Schr\"odinger equation. arXiv:1203.6375v1.

\bibitem{O} E. Onodera, Bilinear Estimates associated to the Schr\"odinger equation with a nonelliptic principal part, J. Ana. and App. {\bf {27}} (2008), 1--10.

\bibitem{W2} Y. Wang, Periodic nonlinear Schr\"odinger equation in critical $H^s(\T^n)$ spaces, arXiv:1203.6738.

\end{thebibliography}
\end{document}